\newtheorem{theorem}{Theorem}[section]
\newtheorem*{claim*}{Claim}
\newtheorem{theorem MTP}{Mass Transference Principle}
\newtheorem*{theorem MTP*}{Mass Transference Principle}
\newtheorem{theorem K}{Khintchine's Theorem}
\newtheorem*{theorem K*}{Khintchine's Theorem}
\newtheorem{theorem J}{Jarn\'{\i}k's Theorem}
\newtheorem*{theorem J*}{Jarn\'{\i}k's Theorem}
\newtheorem{theorem KJ}{Khintchine--Jarn\'{\i}k Theorem}
\newtheorem*{theorem KJ*}{Khintchine--Jarn\'{\i}k Theorem}
\theoremstyle{definition}
\theoremstyle{remark}
\newtheorem*{remark*}{Remark}
\newtheorem*{notation*}{Notation}
\renewcommand{\Bbb}[1]{\mathbb{#1}}
\newcommand{\N}{{\Bbb N}}         
\newcommand{\Q}{{\Bbb Q}}         
\newcommand{\R}{{\Bbb R}}        
\newcommand{\Z}{{\Bbb Z}}         
\newcommand{\cA}{{\cal A}}
\newcommand{\cH}{{\cal H}}
\newcommand{\cK}{{\cal K}}
\newcommand{\cM}{{\cal M}}
\newcommand{\cN}{{\cal N}}
\newcommand{\cQ}{{\cal Q}}
\newcommand{\cU}{{\cal U}}
\newcommand{\diam}{r}
\renewcommand{\le}{\leq}
\renewcommand{\ge}{\geq}
\newcommand{\ba}{\mathbf{a}}
\newcommand{\p}{\mathbf{p}}
\newcommand{\bt}{\mathbf{t}}
\newcommand{\x}{\mathbf{x}}
\newcommand{\btau}{\mathbf{\tau}}
\DeclareMathOperator{\dimh}{dim_H}
\newcommand{\Addresses}{{
  \bigskip
  \footnotesize

  D.~Allen,
  \textsc{School of Mathematics, University of Bristol, Fry Building
Woodland Road, Bristol, BS8 1UG, and the Heilbronn Institute for Mathematical Research, Bristol, United Kingdom}\par\nopagebreak
  \textit{E-mail address:} \texttt{demi.allen@bristol.ac.uk}

  \medskip

  B.~Wang,
  \textsc{School of Mathematics and Statistics, Huazhong University of Science and Technology, Wuhan, 430074, P. R. China}\par\nopagebreak
  \textit{E-mail address:} \texttt{bwei\_wang@hust.edu.cn}

}}
\title{{\sc {A note on weighted simultaneous Diophantine approximation on manifolds}}}
\author{Demi Allen\footnote{supported by the Heilbronn Institute for Mathematical Research} \and {Baowei Wang}}
\date{\today}
\begin{document}

\maketitle
\begin{abstract}
In this note, we present an improvement to a recent result due to Beresnevich, Levesley, and Ward (2021) pertaining to weighted simultaneous Diophantine approximation on manifolds.
\end{abstract}

\noindent{\small 2020 {\it Mathematics Subject Classification}\/: Primary 11J13, 28A78; Secondary 11J83.}

\bigskip

\noindent{\small{\it Keywords and phrases\/: Weighted Diophantine approximation, Manifolds, Hausdorff dimension, Mass Transference Principle.}}

\maketitle

\section{Introduction and statement of the result}

A fundamental problem in metric Diophantine approximation is to understand the ``size'', specifically the Lebesgue measure, Hausdorff measure, and Hausdorff dimension, of the classical set of \emph{simultaneously $\psi$-well approximable points} in $\R^n$. For a given approximating function $\psi: \N \to \R_{\geq 0}$, that is the set
\[W_n(\psi) = \left\{\x=(x_1,\dots,x_n) \in \R^n: \max_{1 \leq i \leq n}{|qx_i-p_i|}<\psi(q) \text{ for infinitely many } (\p,q) \in \Z^n \times \N \right\}.\]

For monotonic approximating functions, the Lebesgue measure and Hausdorff measure of $W_n(\psi)$ are characterised, respectively, by classical results due to Khintchine \cite{Khintchine 1924, Khintchine 1926} and Jarn\'{\i}k \cite{Jarnik31} from the 1920s and 1930s. In the 1940s, Duffin and Schaeffer constructed a counter-example demonstrating that monotonicity of the approximating function is essential in the one-dimensional version of Khintchine's Theorem and proferred a conjecture concerning the expected result for non-monotonic approximating functions \cite{Duffin-Schaeffer}. A higher-dimensional version of the Duffin-Schaeffer Conjecture, which encapsulates the setting of simultaneous Diophantine approximation in $\R^n$, was formulated by Sprind\v{z}uk in \cite[Chapter 1, Section 8]{Sprindzuk}. In 1990, Pollington and Vaughan \cite{Pollington-Vaughan} proved this conjecture of Sprind\v{z}uk for $n \geq 2$ and, in a recent breakthrough, the original conjecture of Duffin and Schaeffer was proved by Koukoulopoulos and Maynard \cite{Koukoulopoulos-Maynard}. Moreover, in \cite{BV MTP} Beresnevich and Velani demonstrated the equivalence of the Duffin-Schaeffer Conjecture and its appropriate Hausdorff measure counterpart. This equivalence is a consequence of the celebrated Mass Transference Principle proved in \cite{BV MTP}. Taking all of the above results together, we have a fairly complete picture regarding the metric theory of the set $W_{n}(\psi)$.

In recent years, it has become increasingly popular to study various generalisations of the set $W_n(\psi)$ and how they interact with other naturally arising sets such as curves and manifolds. The particular generalisation of $W_n(\psi)$ which will concern us here will be the \emph{weighted simultaneously $\Psi$-approximable points} in $\R^n$. Moreover, we will be interested in studying the Hausdorff dimension of the intersection of this set with manifolds for a particular form of the approximating function~$\Psi$. Throughout, given $X \subset \R^n$ and $s \geq 0$, we will write $\cH^s(X)$ to denote the Hausdorff $s$-measure of~$X$. We will denote by $\dimh{X}$ the Hausdorff dimension of $X$ and we will write $\lambda_n(X)$ to denote the $n$-dimensional Lebesgue measure of $X$. For definitions and properties of Hausdorff measures and dimension, we refer the reader to \cite{Falconer}.

Let $\Psi: \N \to \R_{\geq 0}^n$ be an approximating function such that
\[\Psi(q) = (\psi_1(q),\dots,\psi_n(q))\]
where $\psi_i: \N \to \R_{\geq 0}$ for each $1 \leq i \leq n$. We define the \emph{weighted simultaneously $\Psi$-approximable points} in $\R^n$ as
\[W_n(\Psi) = \left\{\x \in \R^n: |qx_i - p_i| < \psi_i(q), \quad 1 \leq i \leq n, \quad \text{for i.m. } (\p,q) \in \Z^n \times \N\right\}.\]
Throughout, we will frequently use the shorthand ``i.m.''~for ``infinitely many''. When $\Psi(q)=(q^{-\tau_1},\dots,q^{-\tau_n})$ for some $\btau = (\tau_1,\dots,\tau_n) \in \R_{>0}^{n}$, we write $W_n(\btau)$ in place of $W_n(\Psi)$.

When considering manifolds, we will look at them locally on some open subset $\cU \subset \R^d$ and will use the following Monge parameterisation without loss of generality:
\begin{align}\label{4}\cM = \{(x,f(x)): x \in \cU\} \subset \R^n,\end{align}
where $d$ is the dimension of the manifold, $m$ is the codimension of the manifold, i.e. $d+m=n$, and $f: \cU \to \R^m$.

Considering the Hausdorff dimension of the intersection of $W_{n}(\btau)$ with a manifold $\cM$, we are able to establish the following result.

\begin{theorem}\label{Conjecture W}
Let $\cM = \{(x,f(x)): x \in \cU \subset \R^d\} \subset \R^n$ where $f: \cU \to \R^m$ is such that $f \in C^{(2)}$. Here, $d$ is the dimension of the manifold $\cM$ and $m$ is its codimension, so that $n=d+m$. Suppose $\btau = (\tau_1,\dots,\tau_n) \in \R_{>0}^n$ is such that
\[\tau_1 \geq \tau_2 \geq \dots \geq \tau_d \geq \max_{d+1 \leq i \leq n}{\tau_i} \quad \text{and} \quad \sum_{j=1}^{m}{\tau_{d+j}}<1.\]
Then,
\[\dimh{(W_n(\btau) \cap \cM)} \geq \min_{1 \leq i \leq d}\left\{\frac{n+1+\sum_{k=i}^{n}{(\tau_i-\tau_k)}}{\tau_i + 1}-m\right\}.\]
\end{theorem}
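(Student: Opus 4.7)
The plan is to reduce the problem, via the Monge parameterisation, to estimating the Hausdorff dimension of a $\limsup$ set of axis-aligned rectangles in $\cU \subset \R^d$, and then to invoke a mass transference principle for rectangles. Writing $\bp = (\bp', \bp'')$ with $\bp' \in \Z^d$ and $\bp'' \in \Z^m$, the intersection $W_n(\btau) \cap \cM$ corresponds under the Monge parameterisation to the set of $x \in \cU$ for which
\begin{equation*}
|qx_i - p_i| < q^{-\tau_i} \ (1 \le i \le d), \qquad |qf_j(x) - p_{d+j}| < q^{-\tau_{d+j}} \ (1 \le j \le m),
\end{equation*}
hold for infinitely many $(q, \bp) \in \N \times \Z^n$.

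The first main step is a reduction to a purely rectangular $\limsup$ problem on $\cU$. Here the hypothesis $\sum_{j=1}^m \tau_{d+j} < 1$ is the correct ``subcritical'' Dirichlet-type condition: combined with the $C^{(2)}$ smoothness of $f$, it permits one to show that, for a full measure subset of rational points $\bp'/q \in \cU$ (with $q$ ranging over a set of positive density), there exists an associated $\bp'' \in \Z^m$ with $|qf_j(\bp'/q) - p_{d+j}| \ll q^{-\tau_{d+j}}$ for every $j$. A Taylor expansion of $f$ about $\bp'/q$ then propagates this estimate to all $x$ in the axis-aligned rectangle around $\bp'/q$ whose $i$-th side length is $c\,q^{-1-\tau_i}$. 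Consequently the image of $W_n(\btau) \cap \cM$ under the projection $(x, f(x)) \mapsto x$ contains a $\limsup$ set of such rectangles, indexed by a sufficiently dense (``ubiquitous'') family of pairs $(q, \bp')$.

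The dimension estimate then follows from a mass transference principle for rectangles (in the form of Wang--Wu--Xu, or a subsequent refinement). The exponent
\begin{equation*}
\min_{1 \le i \le d}\left\{\frac{n+1+\sum_{k=i}^{n}(\tau_i - \tau_k)}{\tau_i + 1} - m\right\}
\end{equation*}
emerges from the standard optimisation of a ``contracting'' parameter across the $d$ coordinate directions: the quantity $\sum_{k=i}^n(\tau_i - \tau_k)$ (nonnegative thanks to the ordering $\tau_1 \ge \cdots \ge \tau_d \ge \max_{d+1 \le k \le n} \tau_k$) encodes the total weighted overhead relative to direction $i$ summed across all $n$ approximation constraints, and the subtraction of $m$ accounts for the $m$ equations $y = f(x)$ defining $\cM$.

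The principal obstacle is the second step: establishing, using only $C^{(2)}$ regularity, that the family of ``good'' pairs $(q, \bp')$ genuinely covers a full measure portion of $\cU$ at each scale. This is essentially a Dirichlet- or Minkowski-type counting result for simultaneous rational approximations of $f(\bp'/q)$ with fixed denominator $q$; the interplay of the ordering hypothesis with the condition $\sum_{j=1}^m \tau_{d+j} < 1$ is what makes this feasible and yields the claimed improvement over the earlier Beresnevich--Levesley--Ward bound.
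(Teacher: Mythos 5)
Your high-level strategy matches the paper's: prove a Dirichlet/Minkowski-type theorem for weighted approximation on the manifold, convert it into a full-measure $\limsup$ set of rectangles in $\cU$, and invoke a mass transference principle for rectangles. However, your proposal leaves the essential steps unexecuted, and more importantly misidentifies where the difficulty (and the improvement over Beresnevich--Levesley--Ward) actually lies.

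The Dirichlet-type step that you flag as the principal obstacle is not, in fact, the crux. It is a direct application of Minkowski's theorem on linear forms to a unimodular system built from $f_j$, $\partial f_j/\partial x_i$ and the identity (cf.\ Theorem~\ref{Dirichlet Theorem}). The freedom one has is the choice of exponents $a_1,\dots,a_d > 1$ in the $d$ ``horizontal'' rows, subject only to the determinant condition $(a_1-1)+\dots+(a_d-1)+\sum_{j=1}^m \tau_{d+j}=1$; the $C^{(2)}$ Taylor estimate then shows that the approximations of $f_j(\bp'/q)$ by $p_{d+j}/q$ stay at scale $q^{-1-\tau_{j+d}}$. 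There is no real obstruction here. The decisive ingredient that your proposal blurs over is which mass transference principle you apply. Writing ``Wang--Wu--Xu, or a subsequent refinement'' glosses over the distinction that carries the entire content of the theorem. The Wang--Wu--Xu (2015) principle transfers from \emph{balls} to rectangles, which forces all $a_i$ to be equal; that in turn requires $a_i = 1 + (1-\sum_j \tau_{d+j})/d \le 1+\tau_i$ for each $i$, i.e.\ precisely the extra hypothesis $\tau_d \ge (1-\sum_j \tau_{d+j})/d$ that Beresnevich--Levesley--Ward had to impose. The improvement proved here comes from using the rectangles-to-rectangles principle of Wang--Wu (2019), which allows the initial full-measure $\limsup$ set to consist of genuine rectangles with \emph{unequal} side exponents $a_i$. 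This freedom is then exploited by setting $a_i = 1+\tau_i$ (so $t_i=0$) for the indices $i > K$ where $\tau_i$ is small, and distributing the remaining ``Dirichlet budget'' evenly over the first $K$ coordinates, which is what makes the case $\tau_d < (1-\sum_j\tau_{d+j})/d$ tractable. Without isolating this point, the argument cannot reach the stated bound beyond the regime already covered by \cite{BLW}. Finally, you do not carry out the combinatorial optimisation over $A \in \cA$ in the MTP formula; the reduction of the dimension number to $\tfrac{n+1+\sum_{k\ge i}(\tau_i-\tau_k)}{1+\tau_i}-m$, and the verification that the minimum over the index sets reduces to the minimum over $1\le i\le d$, is a nontrivial computation that needs to be done.
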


Theorem \ref{Conjecture W} improves upon a recent result due to Beresnevich, Levesley, and Ward \cite[Theorem~1.9]{BLW}, who established the same lower bound for the Hausdorff dimension of $W_n(\btau) \cap \cM$ as in Theorem \ref{Conjecture W} but subject to more stringent conditions on the \emph{weight vector} $\btau$. More precisely, they required that $\btau = (\tau_1,\dots,\tau_n) \in \R_{>0}^n$ satisfies
\[\tau_1 \geq \tau_2 \geq \dots \geq \tau_d \geq \max_{d+1 \leq i \leq n}\left\{\tau_i, \frac{1-\sum_{j=1}^{m}{\tau_{j+d}}}{d}\right\} \quad \text{and} \quad \sum_{j=1}^{m}{\tau_{d+j}}<1.\]
In the interest of brevity, we refer the reader to \cite{BLW} and references therein for a detailed overview regarding the state of the art of this problem. Nevertheless, it is still worth remarking here that in the classical case of unweighted simultaneous Diophantine approximation, where the approximating functions in each co-ordinate direction are equal, establishing results about $W_n(\psi) \cap \cM$ is a well-studied problem. In particular, in this case, we note that the problem of establishing upper and lower bounds for the Hausdorff dimension of $W_n(\psi) \cap \cM$ for various manifolds and various forms of the approximating function $\psi$ has been studied in, for example, \cite{B, BDV,  BLVV,  BVVZ, BV2007, BZ2010, Simmons, VV}. Indeed, one may think of Theorem \ref{Conjecture W} and \cite[Theorem 1.9]{BLW} as a generalisation of \cite[Theorem 1]{BLVV} to the setting of weighted simultaneous approximation on manifolds. Results relating to the Lebesgue measure of the set of weighted simultaneously $\Psi$-approximable points, $W_n(\Psi)$, not intersected with manifolds, can be traced back to work of Khintchine \cite{Khintchine weighted}. The Hausdorff dimension of sets of weighted simultaneously approximable points was studied by Rynne in \cite{Rynne}.

Our proof strategy for establishing Theorem \ref{Conjecture W} mirrors that employed by Beresnevich, Levesley, and Ward in proving \cite[Theorem 1.9]{BLW}. In particular, we begin by establishing a ``Dirichlet-type'' theorem for weighted approximation on manifolds (Theorem \ref{Dirichlet Theorem}) and we use this result to construct an appropriate full measure set contained in our manifold $\cM$. Our proof is completed via an application of a mass transference principle from ``rectangles to rectangles'' proved recently by Wang and Wu in \cite{WW2019} which allows us to deduce the desired lower bound on the Hausdorff dimension of $W_n(\tau) \cap \cM$. The key differences allowing for the improvement in Theorem \ref{Conjecture W} compared with \mbox{\cite[Theorem 1.9]{BLW}} are the more general ``Dirichlet-type'' theorem and the use of this more recent mass transference principle from \cite{WW2019} in the final step of our proof. At the analogous point in the proof of \cite[Theorem 1.9]{BLW}, Beresnevich, Levesley and Ward use an earlier mass tranference principle from ``balls to rectangles'' established by Wang, Wu, and Xu in \cite{WWX2015}.

\section{Mass transference principle from rectangles to rectangles}

The original Mass Transference Principle due to Beresnevich and Velani \cite{BV MTP} allows us to infer Hausdorff measure statements for $\limsup$ sets of balls in $\R^n$ from appropriate Lebesgue measure statements. This result is somewhat surprising since Hausdorff measure is in some sense a refinement of Lebesgue measure. As a result, the Mass Transference Principle has a number of profound consequences and has come to be an important tool in the study of metric Diophantine approximation. For example, one such consequence is the aforementioned example that the Mass Transference Principle implies the equivalence of the Duffin-Schaeffer Conjecture and its Hausdorff measure analogue.

Given a real number $s > 0$ and a ball $B := B(x,r)$ in $\R^n$ of radius $r$ centred at $x$, let \mbox{$B^s := B(x,r^{\frac{s}{n}})$}. In particular, note that $B^n = B$. In its most simple form, the Mass Transference Principle reads as follows.

\begin{theorem MTP*}[Beresnevich -- Velani, \cite{BV MTP}]\label{mtp theorem}
Let $\{B_j\}_{j\in\N}$ be a sequence of balls in $\R^n$ with
radii $\diam(B_j)\to 0$ as $j\to\infty$. Let $s>0$ and let $\Omega$ be a ball in $\R^n$. Suppose that, for any ball $B$ in~$\Omega$,
\[ \cH^n\big(\/B\cap\limsup_{j\to\infty}B^s_j{}\,\big)=\cH^n(B) \ .\]
Then, for any ball $B$ in $\Omega$,
\[ \cH^s\big(\/B\cap\limsup_{j\to\infty}B^n_j\,\big)=\cH^s(B) \ .\]
\end{theorem MTP*}

\begin{remark*}
The statement above is a simplified form of \cite[Theorem 2]{BV MTP}. Also recall that for Borel sets in $\R^n$, the Hausdorff $n$-measure, $\cH^n$, is a constant multiple times $n$-dimensional Lebesgue measure,~$\lambda_n$ (see, for example, \cite{Falconer}). So the above statement really does allow us to pass between Lebesgue measure and Hausdorff measure statements.
\end{remark*}

Since the publication of \cite{BV MTP}, the original Mass Transference Principle from ``balls to balls'' stated above has been extended in a multitude of directions. For example, in \cite{BV slicing}, Beresnevich and Velani established a mass transference principle for systems of linear forms which was later improved in \cite{Allen-Beresnevich}. This result was further extended to a mass transference principle for sets satisfying a ``local scaling property'' in locally compact metric spaces in \cite{Allen-Baker}. A mass transference principle from ``balls to rectangles'' was established in \cite{WWX2015} and has subsequently been upgraded to a mass transference principle from ``rectangles to rectangles'' in \cite{WW2019}. Results relating to a mass transference principle from ``balls to arbitrary open sets'' have been established by Koivusalo and Rams \cite{Koivusalo-Rams} and Zhong~\cite{Zhong}.

As hinted at earlier, for our current purposes, we are particularly interested in the mass transference principle from ``rectangles to rectangles'' established in \cite{WW2019}. To aid readability we will not state the result of \cite{WW2019} in full generality but rather we will present here a simplified statement which follows as a corollary of the mass transference principle given by \cite[Theorem 3.4]{WW2019} and which is more directly applicable to the problem at hand.

Let $\cU=\prod_{i=1}^n\cU_i \subset \R^n$ with $\cU_i$ being locally compact in $\mathbb{R}$ for each $1\le i\le n$. For $\ba = (a_1,\dots,a_n) \in \R_{>0}^n$ and a constant $c > 0$, let
\[W^{c}_{\ba} = \left\{x \in \cU: \left|x_i - \frac{p_i}{q}\right|<\frac{c}{q^{a_i}} \quad \text{for } 1 \leq i \leq n, \quad \text{for i.m. } (\p,q) \in \Z^n \times \N \right\}.\]
For $\bt = (t_1,\dots,t_n) \in \R_{\geq0}^n$ and a constant $c' >0$, define
\[W^{c'}_{\ba}(\bt) = \left\{x \in \cU: \left|x_i-\frac{p_i}{q}\right| < \frac{c'}{q^{a_i + t_i}} \quad \text{for } 1 \leq i \leq n, \quad \text{for i.m. } (\p,q) \in \Z^n \times \N\right\}.\]
The following statement is a consequence of \cite[Theorem 3.4]{WW2019}.

\begin{theorem}[Wang -- Wu, \cite{WW2019}] \label{rectangles mtp}
Let $W^{c}_{\ba}$ and $W^{c'}_{\ba}(\bt)$ be as defined above and suppose that\
\[\lambda_n(W^{c}_{\ba}) = \lambda_n(\cU).\]
Then,
\begin{align*}
\dimh{(W^{c'}_{\ba}(\bt))} \geq \min_{A \in \cA}\left\{\sum_{k \in \cK_1 \cup \cK_2}{1}+\frac{\sum_{k \in \cK_3}{a_k}-\sum_{k \in \cK_2}{t_k}}{A}\right\},
\end{align*}
where
\[\cA = \{a_i, a_i+t_i: 1 \leq i \leq n\}\]
and for each $A \in \cA$, the sets $\cK_1, \cK_2, \cK_3$ are defined as
\[\cK_1 = \{k: a_k \geq A\}, \quad \cK_2=\{k: a_k+t_k \leq A\} \setminus \cK_1, \quad \cK_3=\{1,\dots,n\}\setminus (\cK_1 \cup \cK_2)\]
thus giving a partition of $\{1,\dots,n\}$.
\end{theorem}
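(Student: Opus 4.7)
Since Theorem \ref{rectangles mtp} is advertised as a consequence of \cite[Theorem 3.4]{WW2019}, my plan is to derive it by specialising the general Wang--Wu rectangles-to-rectangles mass transference principle to the particular family of Diophantine rectangles at hand, and then reading off the explicit form of the dimension bound.

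The first step is to express both $W^c_\ba$ and $W^{c'}_\ba(\bt)$ as $\limsup$ sets of axis-aligned rectangles in the format required by \cite{WW2019}. For each $(\p,q) \in \Z^n \times \N$ with $\p/q \in \cU$, set
\[
R_{q,\p} := \prod_{i=1}^n \left(\tfrac{p_i}{q} - c\, q^{-a_i},\, \tfrac{p_i}{q} + c\, q^{-a_i}\right)
\quad\text{and}\quad
R^\bt_{q,\p} := \prod_{i=1}^n \left(\tfrac{p_i}{q} - c'\, q^{-(a_i+t_i)},\, \tfrac{p_i}{q} + c'\, q^{-(a_i+t_i)}\right).
\]
Then $W^c_\ba = \limsup_{q\to\infty} \bigcup_\p R_{q,\p}$ and $W^{c'}_\ba(\bt) = \limsup_{q\to\infty} \bigcup_\p R^\bt_{q,\p}$, with $R^\bt_{q,\p}$ obtained from $R_{q,\p}$ by contracting the $i$-th side by the factor $(c'/c)\,q^{-t_i}$. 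With common resolution parameter $r_q = q^{-1}$, the source rectangle has side lengths comparable to $r_q^{a_i}$ and the shrunken rectangle to $r_q^{a_i + t_i}$, placing us exactly in the input format of \cite[Theorem 3.4]{WW2019}. The hypothesis $\lambda_n(W^c_\ba) = \lambda_n(\cU)$ supplies the full Lebesgue measure condition on the source $\limsup$ set that the Wang--Wu theorem requires.

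The second step is to match the dimension bound produced by \cite[Theorem 3.4]{WW2019} with the explicit formula in the statement. The general Wang--Wu bound is a minimum over "critical" scaling exponents $A$ at which the geometry of the optimal cover by cubes of side $q^{-A}$ changes regime; in the present setting these critical values are exactly the elements of $\cA = \{a_i, a_i+t_i : 1 \leq i \leq n\}$, since between consecutive elements of $\cA$ the relevant dimension expression is linear in $1/A$. For any fixed $A \in \cA$, the partition of $\{1,\ldots,n\}$ into $\cK_1, \cK_2, \cK_3$ records, coordinate by coordinate, whether the source side $q^{-a_k}$ is already shorter than the reference scale $q^{-A}$ (contribution $1$ from $\cK_1$), whether the shrunken side $q^{-(a_k+t_k)}$ is still longer than $q^{-A}$ (contribution $1 - t_k/A$ from $\cK_2$, the overshoot penalty), or whether $A$ lies strictly between the two exponents (fractional contribution $a_k/A$ from $\cK_3$). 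Summing these coordinate contributions reproduces exactly $\sum_{k \in \cK_1 \cup \cK_2} 1 + A^{-1}\bigl(\sum_{k \in \cK_3} a_k - \sum_{k \in \cK_2} t_k\bigr)$.

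The principal obstacle in executing this plan is the bookkeeping required to translate from the general abstract framework of \cite{WW2019} — formulated for arbitrary families of rectangles with abstract scaling vectors — to the concrete rational-point setup above; in particular one has to verify that $\{R_{q,\p}\}$ satisfies the local scaling and ubiquity hypotheses invoked by the general theorem, and confirm that the absolute constants $c, c'$ (which merely rescale side lengths by uniform multiplicative factors) are irrelevant for the Hausdorff dimension. A small but necessary combinatorial point is the restriction of the minimisation to $A \in \cA$, which follows from the piecewise-linear structure of the dimension expression in $1/A$, ruling out interior minima. Given \cite[Theorem 3.4]{WW2019} as a black box, the remaining work is entirely a careful and transparent specialisation.
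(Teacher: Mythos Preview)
The paper does not supply a proof of Theorem~\ref{rectangles mtp} at all: it is stated as a black-box corollary of \cite[Theorem~3.4]{WW2019}, and the authors explicitly say they are presenting ``a simplified statement which follows as a corollary'' of that result without carrying out the specialisation. So there is no ``paper's own proof'' to compare against; your proposal is in fact \emph{more} than what the paper does, since you sketch the derivation that the authors omit.

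Your outline is the natural one and is broadly correct: write $W^c_\ba$ and $W^{c'}_\ba(\bt)$ as $\limsup$ sets of axis-aligned rectangles with side-exponents $a_i$ and $a_i+t_i$ in the resolution parameter $q^{-1}$, feed this into the general rectangles-to-rectangles principle, and read off the dimension number via the $\cK_1,\cK_2,\cK_3$ partition. Your heuristic for the three regimes and for restricting the minimum to $\cA$ is sound. The one genuinely nontrivial point, which you correctly flag but do not resolve, is that \cite[Theorem~3.4]{WW2019} is formulated for a \emph{local ubiquitous system of rectangles}, not merely for a full-measure $\limsup$ set; passing from the hypothesis $\lambda_n(W^c_\ba)=\lambda_n(\cU)$ to the local ubiquity condition required there is a separate (if standard) step, and your sentence ``supplies the full Lebesgue measure condition \dots\ that the Wang--Wu theorem requires'' slightly overstates matters before you later, and more accurately, list this as the principal obstacle. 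Since the paper itself treats the theorem as a citation rather than a result to be proved, this is not a defect relative to the paper; it is simply the place where actual work would be needed if one wanted a self-contained derivation.
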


\section{Proof of Theorem \ref{Conjecture W}}

In this section, we will present our proof of Theorem \ref{Conjecture W}. We follow essentially the same strategy as laid out by Beresnevich, Levesley, and Ward in their proof of \cite[Theorem 1.9]{BLW}. As already alluded to, the main difference is that we use the mass transference principle from ``rectangles to rectangles'' proved in \cite{WW2019}, whereas Beresnevich, Levesley and Ward used the earlier mass transference principle from ``balls to rectangles'' established in \cite{WWX2015}. Our proof is split into three main parts. In Section~\ref{Dirichlet section} we prove a Dirichlet-type theorem (Theorem \ref{Dirichlet Theorem}) for weighted approximation on manifolds. In Section \ref{preparations section} we use Theorem \ref{Dirichlet Theorem} to construct an appropriate full measure set and make some other preliminary preparations which eventually enable us to apply the mass tranference principle for rectangles (Theorem \ref{rectangles mtp}) to complete the proof in Section \ref{proof completion section}.

\subsection{A Dirichlet-type theorem for weighted simultaneous approximation on manifolds} \label{Dirichlet section}

The first step in establishing Theorem \ref{Conjecture W} is to prove a Dirichlet-type theorem for weighted simultaneous approximation on manifolds which will eventually help us to ``construct'' a suitable full measure set to which we can apply Theorem \ref{rectangles mtp}. The following statement provides a suitable Dirichlet-type theorem and is a modification of \mbox{\cite[Theorem 3.1]{BLW}}. Indeed, we follow the same line of proof as the proof of \cite[Theorem 3.1]{BLW} with some parts following (almost) verbatim.

\begin{theorem} \label{Dirichlet Theorem}
Let $\cM = \{(x, f(x)): x \in \cU \subset \R^d\} \subset \R^n$ where $f: \cU \to \R^m$ is such that $f \in C^{(2)}$. Let $\btau = (\tau_1, \dots, \tau_n) \in \R_{>0}^n$ and $\ba = (a_1,\dots,a_d) \in \R_{>0}^d$ be such that
\[\sum_{j=1}^{m}{\tau_{d+j}}<1,\quad {\text{and}}\quad \min_{1 \leq i \leq d}{a_i} > 1.\]
Suppose also that
\begin{align} \label{a condition}
(a_1 - 1) + \dots + (a_d - 1) + \sum_{j=1}^{m}{\tau_{d+j}}=1.
\end{align}
For any $x \in \cU$, there exists an integer $Q_0$ such that for any $Q > Q_0$ there exists \mbox{$(p_1,\dots,p_n,q) \in \Z^n \times \N$} with $1 \leq q \leq Q$ and $\left(\frac{p_1}{q}, \dots, \frac{p_d}{q}\right) \in \cU$ such that
\begin{align} \label{Dirichlet1}
\left|x_i - \frac{p_i}{q}\right| < \frac{4^{m/d}}{qQ^{a_i-1}} \quad \text{for } 1 \leq i \leq d,
\end{align}
and
\begin{align} \label{Dirichlet2}
\left|f_j\left(\frac{p_1}{q},\dots,\frac{p_d}{q}\right)-\frac{p_{d+j}}{q}\right| < \frac{1}{2q^{\tau_{j+d}+1}} \quad \text{for } 1 \leq j \leq m.
\end{align}
Furthermore, for any $x \in \cU \setminus \Q^d$, there exist infinitely many tuples $(p_1,\dots,p_n,q) \in \Z^n \times \N$ with $\left(\frac{p_1}{q},\dots,\frac{p_d}{q}\right) \in \cU$ satisfying \eqref{Dirichlet2} and
\begin{align} \label{Dirichlet3}
\left|x_i - \frac{p_i}{q}\right| < \frac{4^{m/d}}{q^{a_i}} \quad \text{for } 1 \leq i \leq d.
\end{align}
\end{theorem}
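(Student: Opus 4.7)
The plan is to apply Minkowski's theorem on linear forms to the integer lattice $\Z^{n+1}$ in the variables $(q,p_1,\dots,p_n)$, simultaneously controlling the approximation of the base coordinates $x_1,\dots,x_d$ and of the manifold values $f_1(x),\dots,f_m(x)$. Concretely, for $Q$ large I would consider the symmetric convex body in $\R^{n+1}$ defined by
\[
|q|\le Q,\quad |qx_i-p_i|<4^{m/d}Q^{-(a_i-1)}\ (1\le i\le d),\quad |qf_j(x)-p_{d+j}|<\tfrac14 Q^{-\tau_{d+j}}\ (1\le j\le m).
\]
The linear change of variables from $(q,p)$ to the $n+1$ left-hand sides has determinant $\pm 1$, so the volume of the body equals
\[
2^{n+1}\cdot Q\cdot (4^{m/d})^d\cdot (\tfrac14)^m\cdot Q^{-\sum_{i=1}^d(a_i-1)-\sum_{j=1}^m\tau_{d+j}}=2^{n+1}\cdot Q^{\,1-1}=2^{n+1},
\]
using precisely the normalisation \eqref{a condition}. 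Minkowski's theorem then produces a non-trivial integer tuple; since each constraint tends to $0$ as $Q\to\infty$ (because $a_i>1$), the case $q=0$ is impossible for $Q$ large, and by symmetry we may assume $q\ge 1$. The resulting bound $|x_i-p_i/q|<4^{m/d}/(qQ^{a_i-1})$ is exactly \eqref{Dirichlet1}, and its smallness also guarantees $(p_1/q,\dots,p_d/q)\in\cU$ once $Q_0$ exceeds a quantity depending on $\dist(x,\partial\cU)$.

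The main technical step is to convert the ambient estimate on $|qf_j(x)-p_{d+j}|$ into the required manifold bound \eqref{Dirichlet2} at the rational point $(p_1/q,\dots,p_d/q)$. Writing
\[
qf_j(p/q)-p_{d+j}=\bigl(qf_j(x)-p_{d+j}\bigr)+q\bigl(f_j(p/q)-f_j(x)\bigr),
\]
and Taylor-expanding the second summand around $x$ to second order (using $f\in C^{(2)}$) gives $\sum_{i=1}^d\partial_i f_j(x)(p_i-qx_i)+O\bigl(q|p/q-x|^2\bigr)$. With $|p_i-qx_i|<4^{m/d}Q^{-(a_i-1)}$, the first-order term is $O(Q^{-(a_{\min}-1)})$ and the quadratic error is $O\bigl(q^{-1}Q^{-2(a_{\min}-1)}\bigr)$; both tend to $0$ as $Q\to\infty$, since $a_{\min}:=\min_i a_i>1$. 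Consequently, for $Q$ beyond a threshold $Q_0$ (depending on $x$ and on the $C^{(2)}$-norm of $f$ near $x$), this total error is bounded by $\tfrac14 Q^{-\tau_{d+j}}$, and combining yields $|qf_j(p/q)-p_{d+j}|<\tfrac12 Q^{-\tau_{d+j}}\le\tfrac12 q^{-\tau_{d+j}}$, which is \eqref{Dirichlet2} after dividing by $q$.

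For the ``infinitely many'' statement with $x\in\cU\setminus\Q^d$, I would apply the above with $Q=Q_k\to\infty$, producing tuples $(p^{(k)},q^{(k)})$ with $q^{(k)}\le Q_k$. Since $Q_k\ge q^{(k)}$ and $a_i\ge 1$, the bound in \eqref{Dirichlet1} immediately weakens to $|x_i-p_i^{(k)}/q^{(k)}|<4^{m/d}(q^{(k)})^{-a_i}$, which is \eqref{Dirichlet3}. If only finitely many distinct tuples occurred, a pigeonhole argument would isolate a single tuple $(p,q)$ satisfying $|x_i-p_i/q|<4^{m/d}/(qQ_k^{a_i-1})\to 0$ for every $i$, forcing $x_i=p_i/q$ for all $i$ and contradicting $x\notin\Q^d$. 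The main obstacle is the precise bookkeeping of constants: one must choose the side-lengths in the Minkowski body so that the volume saturates the normalisation \eqref{a condition} \emph{and} so that, after the Taylor error has been absorbed, the resulting bound lands exactly on the factor $\tfrac12 q^{-\tau_{d+j}-1}$ prescribed in \eqref{Dirichlet2}.
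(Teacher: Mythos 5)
Your overall plan (Minkowski on linear forms, Taylor expansion to transfer the ambient estimate to the rational point, pigeonhole for infinitude) is the same as the paper's, and your treatment of the determinant/volume computation, the exclusion of $q=0$, the passage from \eqref{Dirichlet1} to \eqref{Dirichlet3}, and the ``infinitely many'' contradiction are all fine. However, there is a genuine gap in the core Taylor step, and it comes from your choice of linear form.

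You put the constraint $|qf_j(x)-p_{d+j}|<\tfrac14 Q^{-\tau_{d+j}}$ into the Minkowski body and then must absorb the \emph{entire} Taylor error
\[
q\bigl(f_j(p/q)-f_j(x)\bigr)=\sum_{i=1}^d \partial_i f_j(x)\,(p_i-qx_i)+qR_j
\]
into a bound of size $\tfrac14 Q^{-\tau_{d+j}}$. The quadratic remainder $qR_j$ is $O\bigl(q^{-1}Q^{-2(a_{\min}-1)}\bigr)$ and can be handled (using $\tau_{d+j}<1$ so that $q^{-\tau_{d+j}}\ge q^{-1}$, which lets the $q^{-1}$ factors cancel and reduces to $Q^{-2(a_{\min}-1)}$ versus a constant). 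But the \emph{first-order} term is of size $\asymp Q^{-(a_{\min}-1)}$ with no factor of $q^{-1}$, and the hypotheses only give $a_{\min}>1$ together with $\sum_{i}(a_i-1)+\sum_j\tau_{d+j}=1$. These do not imply $a_{\min}-1\ge\max_j\tau_{d+j}$: for instance $d=2$, $m=1$, $\tau_3=1/2$, $a_1=a_2=5/4$ satisfies all hypotheses but $a_{\min}-1=1/4<1/2=\tau_3$, so $Q^{-(a_{\min}-1)}/Q^{-\tau_3}\to\infty$. Thus no threshold $Q_0$ makes your claimed bound on the ``total error'' hold, and the argument breaks precisely in the generality that this theorem (and the improvement over Beresnevich--Levesley--Ward) is meant to achieve.

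The paper avoids this by building the first-order Taylor correction into the Minkowski linear form itself: instead of $qf_j(x)-p_{d+j}$, it takes
\[
qg_j(x)+\sum_{i=1}^d p_i\,\partial_i f_j(x)-p_{d+j},\qquad g_j:=f_j-\sum_i x_i\,\partial_i f_j,
\]
which equals $\bigl(qf_j(x)-p_{d+j}\bigr)+\sum_i(p_i-qx_i)\partial_i f_j(x)$. This is still a unimodular linear form in $(q,p)$, so Minkowski applies with the same volume computation, but now
\[
qf_j(p/q)-p_{d+j}=\Bigl(qg_j(x)+\sum_i p_i\,\partial_i f_j(x)-p_{d+j}\Bigr)+qR_j,
\]
so only the quadratic error $qR_j$ remains to be absorbed, which is always possible for $Q$ large. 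You should modify your Minkowski body to use these linear forms; the rest of your argument then goes through.
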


\begin{proof}
Since $\cM$ is constructed via a twice continuously differentiable function $f: \cU \to \R^m$, we can choose a suitable $\cU$ such that, without loss of generality, the following two constants exist: \label{C definition}
\[C = \max_{\substack{1 \leq i,k \leq d \\ 1 \leq j \leq m}}{\sup_{x \in \cU}{\left|\frac{\partial^2 f_j}{\partial x_i \partial x_k}(x)\right|}} < \infty,\]
and
\[D = \max_{\substack{1 \leq i \leq d \\ 1 \leq j \leq m}}{\sup_{x \in \cU}{\left|\frac{\partial f_j}{\partial x_i}(x)\right|}} < \infty.\]

For $1 \leq j \leq m$, define
\[g_j = f_j - \sum_{i=1}^{d}{x_i \frac{\partial f_j}{\partial x_i}}.\]
Consider the system of inequalities,
\begin{align}
\left|qg_j(x)+\sum_{i=1}^{d}{p_i\frac{\partial f_j}{\partial x_i}(x)} - p_{d+j}\right| &< \frac{Q^{-\tau_{j+d}}}{4} \quad \text{for } 1 \leq j \leq m, \label{Dirichlet4} \\
|qx_i - p_i| &< \frac{4^{m/d}}{Q^{a_i-1}}, \quad \text{for } 1 \leq i \leq d, \label{Dirichlet5} \\
|q| \leq Q. \label{Dirichlet6}
\end{align}

It is a consequence of Minkowski's Theorem for Systems of Linear Forms (see, for example, \mbox{\cite[Theorem~2C]{Schmidt}}) that there exists a non-zero integer solution $(p_1,\dots,p_n,q) \in \Z^{n+1}$ satisfying the inequalities \eqref{Dirichlet4}--\eqref{Dirichlet6}. More precisely, consider the matrix
\[A=
\begin{pmatrix}
g_1 & \frac{\partial f_1}{\partial x_1} & \dots & \frac{\partial f_1}{{\partial}x_d} & -1 & \dots & 0 \\
\vdots & \vdots & & \vdots & \vdots  & \ddots & \vdots \\
g_m & \frac{\partial f_m}{\partial x_1} & \dots & \frac{\partial f_m}{\partial x_d} & 0 & \dots & -1 \\
x_1 & -1 & \dots & 0 & 0 & \dots & 0 \\
\vdots & \vdots & \ddots & \vdots & \vdots &\ddots &\vdots \\
x_d & 0 & \dots & -1 & 0 & \dots & 0 \\
1 & 0 & \dots & \dots\ & \dots & \dots & 0
\end{pmatrix}.\]
It is straightforward to verify that $|\det(A)|=1$. Next we consider the product of the terms on the right-hand sides of \eqref{Dirichlet4}, \eqref{Dirichlet5}, and \eqref{Dirichlet6}. Recalling condition \eqref{a condition}, we have
\begin{align*}
\left(\prod_{j=1}^{m}{\frac{Q^{-\tau_{j+d}}}{4}}\right)\left(\prod_{i=1}^{d}{\frac{4^{m/d}}{Q^{a_i-1}}}\right) \times Q &= 4^{-m} \cdot \frac{1}{Q^{\tau_{d+1}}} \cdots \frac{1}{Q^{\tau_{d+m}}} \cdot (4^{m/d})^d \cdot \frac{1}{Q^{a_1-1}} \cdots \frac{1}{Q^{a_d-1}} \cdot Q \\
                                               &= \frac{Q}{Q^{(a_1-1)+\dots+(a_d-1)+\sum_{j=1}^{m}{\tau_{d+j}}}} \\
                                               &=1.
\end{align*}
Thus, it follows from Minkowski's Theorem for Systems of Linear Forms that there exists a non-zero integer solution $(p_1,\dots,p_n,q) \in \Z^{n+1}$ satisfying the inequalities \eqref{Dirichlet4}--\eqref{Dirichlet6}, as required.

Now, fix some $x \in \cU$. Then, since $\cU$ is open, there exists a ball $B(x,r)$ for some $r > 0$ which is contained in $\cU$. Define
\[\cQ = \left\{Q \in \N: \max_{1 \leq i \leq d}{\left(\frac{4^{m/d}}{Q^{a_i-1}}\right)} < \min\left\{1,r,\left(\frac{1}{2Cd^2}\right)^{\frac{1}{2}}\right\}\right\}.\]
Since $a_i > 1$ for all $1 \leq i \leq d$, we must have
\[\max_{1 \leq i \leq d}{\left(\frac{4^{m/d}}{Q^{a_i-1}}\right)} \to 0 \quad \text{as} \quad Q \to \infty.\]
Thus, there exists some integer $Q_0$ such that $Q \in \cQ$ for any integer $Q \geq Q_0$. We will show that for any $Q \in \cQ$, a solution $(p_1,\dots,p_n,q) \in \Z^{n+1}$ satisfying inequalities \eqref{Dirichlet4}--\eqref{Dirichlet6} also satisfies inequalities \eqref{Dirichlet1} and~\eqref{Dirichlet2}.

Let us first deal with the case where $q = 0$. By the definition of the set $\cQ$, we have
\[\max_{1 \leq i \leq d}{\left(\frac{4^{m/d}}{Q^{a_i-1}}\right)} < 1.\]
Thus, if $q=0$, it follows from \eqref{Dirichlet5} that $|p_i| < 1$, and hence we must have $p_i=0$, for all $1 \leq i \leq d$. Subsequently, in this case, \eqref{Dirichlet4} yields
\[|p_{d+j}| < \frac{Q^{-\tau_{j+d}}}{4} < 1 \quad \text{for } 1 \leq j \leq m.\]
Hence, in the case when $q=0$, our solution must be the zero vector, thus contradicting Minkowski's Theorem for Systems of Linear Forms. Hence we must have $|q| \geq 1$ and, without loss of generality, we will suppose from now on that $q \geq 1$.

Dividing \eqref{Dirichlet5} by $q$, we obtain
\[\left|x_i - \frac{p_i}{q}\right| < \frac{4^{m/d}}{qQ^{a_i-1}} \quad \text{for } 1 \leq i \leq d,\]
which coincides precisely with \eqref{Dirichlet1}. Moreover, note that it follows from the definition of $\cQ$ that $\left(\frac{p_1}{q}, \dots, \frac{p_d}{q}\right) \in B(x,r) \subset \cU$.

Now we need to prove that the solution $(p_1,\dots,p_n,q) \in \Z^{n+1}$ satisfying inequalities \eqref{Dirichlet4}--\eqref{Dirichlet6} also satisfies~\eqref{Dirichlet2}. This conclusion follows via Taylor's Approximation Theorem (see, for example, \mbox{\cite[Chapter 9]{Rudin}}). In detail, since $\left(\frac{p_1}{q}, \dots, \frac{p_d}{q}\right) \in B(x,r) \subset \cU$ and $f: \cU \to \R^m$ is twice differentiable, we can use the second order case of Taylor's Theorem in $d$ dimensions to conclude that
\begin{align} \label{Taylor}
f_j\left(\frac{p_1}{q},\dots,\frac{p_d}{q}\right) = f_j(x) + \sum_{i=1}^{d}{\frac{\partial f_j}{\partial x_i}(x) \left(\frac{p_i}{q}-x_i\right)} + R_j(x,\hat{x})
\end{align}
for some $\hat{x}$ on the line connecting $x$ and $\left(\frac{p_1}{q},\dots,\frac{p_d}{q}\right)$ where
\[R_j(x,\hat{x}) = \frac{1}{2}\sum_{i=1}^{d}{\sum_{k=1}^{d}{\frac{\partial^2 f_j}{\partial x_i \partial x_k}(\hat{x})\left(\frac{p_i}{q}-x_i\right)\left(\frac{p_k}{q}-x_k\right)}}.\]

Combining the above with our definition of $g_j$, we can rewrite the left-hand side of \eqref{Dirichlet4} as
\begin{align} \label{rewriting}
	&\phantom{=}\left|qg_j(x)+\sum_{i=1}^{d}{p_i\frac{\partial f_j}{\partial x_i}(x)} - p_{d+j}\right| \nonumber \\
	&= \left|q\left(f_j\left(\frac{p_1}{q},\dots,\frac{p_d}{q}\right) + \sum_{i=1}^{d}{\frac{\partial f_j}{\partial x_i}(x)\left(x_i-\frac{p_i}{q}\right) - R_j(x,\hat{x}) - \sum_{i=1}^{d}{x_i \frac{\partial f_j}{\partial x_i}(x)}}\right)+\sum_{i=1}^{d}{p_i \frac{\partial f_j}{\partial x_i}(x)} - p_{d+j} \right| \nonumber \\
	&= \left|qf_j\left(\frac{p_1}{q},\dots,\frac{p_d}{q}\right) - p_{d+j} - qR_j(x,\hat{x})\right|.
\end{align}

Suppose for a moment that
\begin{align} \label{R assumed bound}
|qR_j(x,\hat{x})| < \frac{q^{-\tau_{j+d}}}{4} \quad \text{for } 1 \leq j \leq m.
\end{align}

With this assumption, it follows from the reverse triangle inequality combined with \eqref{rewriting}, \eqref{Dirichlet4}, and~\eqref{Dirichlet6}, that
\begin{align*}
\left|qf_j\left(\frac{p_1}{q},\dots,\frac{p_d}{q}\right) - p_{d+j}\right| &\leq \left|qg_j(x) + \sum_{i=1}^{d}{p_i \frac{\partial f_j}{\partial x_i}(x)} - p_{d+j}\right| + |qR_j(x,\hat{x})| \\
                                                                         &\leq \frac{Q^{-\tau_{j+d}}}{4}+\frac{q^{-\tau_{j+d}}}{4} \\
                                                                         &\leq \frac{q^{-\tau_{j+d}}}{2}.
\end{align*}
Dividing through by $q$, we obtain,
\[\left|f_j\left(\frac{p_1}{q},\dots,\frac{p_d}{q}\right)-\frac{p_{d+j}}{q}\right| \leq \frac{1}{2q^{\tau_{j+d}+1}},\]
which is precisely the inequality given in \eqref{Dirichlet2} which we were trying to establish. Thus, to complete this part of the proof, it remains to verify that we can indeed assume \eqref{R assumed bound}. To this end, write $a:=\min_{1 \leq i \leq d}{a_i}$. By hypothesis, we have $a>1$. Upon recalling the definition of our constant $C$ defined on page \pageref{C definition}, it follows from the definition of $R_j(x, \hat{x})$  and \eqref{Dirichlet5} that, for each $1 \leq j \leq m$, we have
\begin{align*}
|q R_j(x,\hat{x})| &= \left|q \cdot \frac{1}{2} \sum_{i=1}^{d}{\sum_{k=1}^{d}{\left(\frac{\partial^2 f_j}{\partial x_i \partial x_k}(\hat{x})\left(\frac{p_i}{q}-x_i\right)\left(\frac{p_k}{q}-x_k\right)\right)}}\right| \\
                   &\leq \left|\frac{q}{2}\right|\sum_{i=1}^{d}{\sum_{k=1}^{d}{\left(C\left|\frac{p_i}{q}-x_i\right|\left|\frac{p_k}{q}-x_k\right|\right)}} \\
                   &\le \frac{qCd^2}{2}\left(\frac{4^{m/d}}{qQ^{a-1}}\right)^2 \\
                   &=\frac{Cd^24^{2m/d}}{2}\cdot \frac{1}{q}\cdot \frac{1}{Q^{2(a-1)}}.
\end{align*}
Since $\tau_{j+d}<1$, the inequality (\ref{R assumed bound}) would follow if we could show
\begin{align}\label{1}
\frac{Cd^24^{2m/d}}{2}\cdot \frac{1}{Q^{2(a-1)}}<\frac{1}{4}.
\end{align}
Now, since $Q\in \cQ$, the inequality (\ref{1}) follows immediately from the definition of $\cQ$. Hence this part of the proof is complete.

Finally, it remains to prove the second part of the theorem, that for any $x \in \cU \setminus \Q^d$, there exist infinitely many tuples $(p_1,\dots,p_n,q) \in \Z^n \times \N$ with $\left(\frac{p_1}{q},\dots,\frac{p_d}{q}\right) \in \cU$ which satisfy \eqref{Dirichlet2} and \eqref{Dirichlet3}. We will argue by contradiction so let us suppose for a moment that there are only finitely many solutions and denote by $\cA$ the set of all solutions $(p_1,\dots,p_n,q) \in \Z^{n+1}$. Since $x \in \cU \setminus \Q^d$, there must be some $1 \leq k \leq d$ such that $x_{k} \notin \Q$. For such~$k$, there exists some $\delta>0$ such that
\begin{align}\label{3}\delta \leq \min_{(p_1,\dots,p_n,q) \in {\cA}}{|qx_{k}-p_{k}|}.\end{align}
Since each $Q \in \cQ$ corresponds to a solution in $\cA$, and $\cQ$ is infinite while $\cA$ is finite, there must exist one solution $(p_1,\dots,p_n,q) \in \cA$ which corresponds to infinitely many $Q$, say $\{Q_{\ell}\}_{{\ell}\ge 1}$. Thus, for all $\ell \geq 1$, there exists $1 \leq q \leq Q_{\ell}$ such that $$
|qx_k-p_k|<\frac{4^{m/d}}{Q_{\ell}^{a_k-1}} \leq \frac{4^{m/d}}{q^{a_k-1}}.
$$ Since $a_k>1$, the middle term above tends to $0$ which yields a contradiction to (\ref{3}). Thus, we conclude that for any $x \in \cU \setminus \Q^d$ there are infinitely many solutions to \eqref{Dirichlet3} also satisfying \eqref{Dirichlet2}.
\end{proof}

\subsection{Preparing to use the mass transference principle for rectangles} \label{preparations section}

We now turn our attention to constructing a suitable full measure $\limsup$ set to which we will later apply the mass transference principle for rectangles (Theorem \ref{rectangles mtp}). Following again the same line of reasoning as in \cite{BLW}, we begin by defining
\begin{align*}
\cN(f,\btau) = &\left\{(p_1,\dots,p_d,q) \in \Z^{d+1}: \left(\frac{p_1}{q},\dots,\frac{p_d}{q}\right) \in \cU \text{ and} \ \left\|qf\left(\frac{p_1}{q},\dots,\frac{p_d}{q}\right)\right\|<\frac{1}{2q^{\tau_{j+d}}}, \ \ 1 \leq j \leq m\right\}.
\end{align*}

Note that from Theorem \ref{Dirichlet Theorem} we have that for all $x \in \cU \setminus \Q^d$, that is for almost all $x \in \cU$, there are infinitely many different vectors $(p_1,\dots,p_d,q) \in \cN(f,\btau)$ for which
\[\left|x_i-\frac{p_i}{q}\right| < \frac{4^{m/d}}{q^{a_i}}, \quad \text{for } 1 \leq  i \leq d,\]
where
\[(a_1-1)+\dots+(a_d-1)+\sum_{j=1}^{m}{\tau_{d+j}}=1, \quad \text{and} \quad \min_{1 \leq i \leq d}{a_i} > 1.\]

Writing
\[B_{(\p,q)}(c) = \left\{x \in \cU: \left|x_i - \frac{p_i}{q}\right|<\frac{c}{q^{a_i}} \quad \text{for } 1 \leq i \leq d \right\}\]
with $c=4^{m/d}$, by Theorem \ref{Dirichlet Theorem} we have
\[\lambda_d\left(\limsup_{(\p,q) \in \cN(f,\btau)}{B_{(\p,q)}(c)}\right) = \lambda_d(\cU).\]

Eventually we will apply the mass transference principle for rectangles (Theorem \ref{rectangles mtp}) to the full measure $\limsup$ set
\[\limsup_{(\p,q) \in \cN(f,\btau)}{B_{(\p,q)}(c)}.\]
First, however, we will establish some other necessary preliminaries. For some constant $c'>0$, let us write
\[W':=\left\{(x,f(x)) \in \cU \times \R^m: x \in \prod_{i=1}^{d}{B\left(\frac{p_i}{q},\frac{c'}{q^{\tau_i+1}}\right)} \quad \text{for i.m. } (p_1,\dots,p_d,q) \in \cN(f,\btau)\right\}.\]
For $c'$ sufficiently small such that $$
Dc'd<\frac{1}{4}
$$ we will show that
\begin{align} \label{manifold containment}
W' &\subset W_n(\btau)\cap\cM.
\end{align}
The consequence of this is that it will be sufficient for us to consider the Hausdorff dimension of the set
\begin{align}\label{2}W:=\left\{x \in \cU: x \in \prod_{i=1}^{d}{B\left(\frac{p_i}{q},\frac{c'}{q^{\tau_i+1}}\right)} \quad \text{for i.m. } (p_1,\dots,p_d,q) \in \cN(f,\btau) \right\}.\end{align}
To see this, suppose for a moment that \eqref{manifold containment} is true. Then, a lower bound on the Hausdorff dimension of the set $W'$ will automatically yield a lower bound for $\dimh{(W_n(\btau)\cap\cM)}$, which is what we are actually interested in. Next, let
\begin{align*}
\pi_d(W') &= \{x\in \cU:(x,f(x)) \in W'\} \\
          &= \left\{x\in \cU: \left|x_i-\frac{p_i}{q}\right| < \frac{c'}{q^{\tau_i+1}}, \quad 1 \leq i \leq d, \quad \text{for i.m. } (p_1,\dots,p_d,q) \in \cN(f,\tau)\right\}
\end{align*}
be the orthogonal projection of $W'$ onto $\R^d$ and note that $\pi_d(W') = W$. We recall that Hausdorff dimension is preserved under bi-Lipschitz mappings (see, for example, \cite[Corollary 2.4]{Falconer}). Since $\pi_d$, being an orthogonal projection, is such a mapping, we have
\[\dimh{W} = \dimh{\pi_d(W')} = \dimh{W'}\]
and so we conclude that it is sufficient for us to find a lower bound for the Hausdorff dimension of the set~$W$.

In order to show the inclusion in \eqref{manifold containment}, let us suppose that $(x,f(x)) \in W'$ and let \mbox{$(p_1,\dots,p_d,q) \in \cN(f,\btau)$} be such that
\[x \in \prod_{i=1}^{d}{B\left(\frac{p_i}{q},\frac{c'}{q^{\tau_i+1}}\right)}.\]
To prove the containment, we need to show that for such $x$, we also have
\[\|qf_j(x)\| < q^{-\tau_{j+d}} \quad \text{for } 1 \leq j \leq m.\]
To do this, we will again use Taylor's Approximation Theorem. Recalling our earlier definitions of the constants $C$ and $D$ (see page \pageref{C definition}),
together with the definition of the set $\cN(f,\btau)$, and the assumption that $\tau_1 \geq \dots \geq \tau_d$, it follows from \eqref{Taylor} that
\begin{align*}
\|qf_j(x)\| &\leq \left\|qf_j\left(\frac{p_1}{q},\dots,\frac{p_d}{q}\right)\right\| + \left|\sum_{i=1}^{d}{\frac{\partial f_j}{\partial x_i}(x)(qx_i - p_i)}\right| + \left|\frac{1}{q}\frac{1}{2}\sum_{i=1}^{d}{\sum_{k=1}^{d}{\frac{\partial^2 f_j}{\partial x_i \partial x_k}(\hat{x})(p_i-qx_i)(p_k-qx_k)}}\right| \\
            &\leq \frac{1}{2q^{\tau_{j+d}}} + \sum_{i=1}^{d}{\left|\frac{\partial f_j}{\partial x_i}(x)\right||p_i-qx_i|} + \frac{1}{2q}\sum_{i=1}^{d}{\sum_{k=1}^{d}{\left|\frac{\partial^2 f_j}{\partial x_i \partial x_k}(\hat{x})\right||p_i-qx_i||p_k-qx_k|}} \\
            &\leq \frac{1}{2q^{\tau_{j+d}}} + Dc'\sum_{i=1}^{d}{q^{-\tau_i}} + \frac{C(c')^2}{2q}\sum_{i=1}^{d}{\sum_{k=1}^{d}{q^{-\tau_i}q^{-\tau_k}}} \\
            &\leq \frac{1}{2q^{\tau_{j+d}}} + Dc'\sum_{i=1}^{d}{q^{-\tau_i}} + \frac{C(c')^2}{2}\sum_{i=1}^{d}{\sum_{k=1}^{d}{q^{-1-\tau_i-\tau_k}}} \\
            &\leq \frac{1}{2q^{\tau_{j+d}}} + Dc'dq^{-\tau_d} + \frac{C(c')^2d^2}{2}q^{-1-2\tau_d}.
\end{align*}
By choosing $c'$ small and letting $q$ be sufficiently large we have
\[Dc'd<\frac{1}{4}, \quad \text{and} \quad \frac{C(c')^2d^2}{2}q^{-1-\tau_d}<\frac{1}{4}.\]
Thus it follows that
\[\frac{1}{2q^{\tau_{j+d}}} + Dc'dq^{-\tau_d} + \frac{C(c')^2d^2}{2}q^{-1-2\tau_d}<\frac{1}{2q^{\tau_{j+d}}}+\frac{1}{4q^{\tau_d}}+\frac{1}{4q^{\tau_d}}\le \frac{1}{q^{\tau_{j+d}}}\]
where the last inequality follows from the assumption that $\tau_{d} \geq \max_{1 \leq j \leq m}{\tau_{j+d}}$.

\subsection{Completing the proof via an application of the mass transference principle for rectangles} \label{proof completion section}

Recall the Monge parameterisation of the manifold (\ref{4}). Since $\cU$ is an open subset of $\mathbb{R}^n$  which can be expressed as a countable union of closed cubes, by the countable stability of Hausdorff dimension we are free to ask $\cU$ to be of the form $\cU=\prod_{i=1}^n \cU_i$ with each $\cU_i$ being a finite closed interval in $\mathbb{R}$. Then we are in a position to complete the proof of Theorem \ref{Conjecture W} by applying the mass transference principle for rectangles
 (Theorem \ref{rectangles mtp}) to obtain a lower bound for the Hausdorff dimension of the set $W$ defined in \eqref{2}. To this end, for some suitably chosen $\ba = (a_1, a_2, \dots, a_d) \in \R_{>0}^{d}$, we will consider the set
\[W_{\ba}=\left\{x \in \cU: \left|x_i-\frac{p_i}{q}\right|<{\frac{4^{m/d}}{q^{a_i}}} \quad \text{for } 1\leq i \leq d \quad \text{for i.m. } (p_1,\dots,p_d,q) \in \cN(f,\btau)\right\}.\]
Provided that $\min_{1 \leq i \leq d}{a_i} > 1$ and \eqref{a condition} is satisfied, it follows from the preceding arguments in Section \ref{preparations section} that $W_{\ba}$ is of full Lebesgue measure in $\cU$. Thus, given $\bt = (t_1,\dots,t_d) \in \R_{\geq 0}^{d}$
we may apply Theorem \ref{rectangles mtp} to $W_{\ba}$ to obtain a lower bound for the Hausdorff dimension of the set
\begin{align*}
W_{\ba}(\bt) &= \left\{x \in \cU: \left|x_i - \frac{p_i}{q}\right|<\frac{c'}{q^{{a_i+t_i}}} \quad \text{for } 1\leq i \leq d \quad \text{for i.m. } (p_1,\dots,p_d,q) \in \cN(f,\btau)\right\},
\end{align*}
where $c'>0$ is the suitably small constant we chose earlier in Section \ref{preparations section}. In what follows, we will always choose $a_i$ and $t_i$ such that $a_i+t_i=1+\tau_i$ and so
\begin{align*}
W_{\ba}(\bt) &= \left\{x \in \cU: \left|x_i - \frac{p_i}{q}\right|<\frac{c'}{q^{1+\tau_i}} \quad \text{for } 1\leq i \leq d \quad \text{for i.m. } (p_1,\dots,p_d,q) \in \cN(f,\btau)\right\},
\end{align*}
which coincides precisely with the set $W$ from (\ref{2}) which we are interested in. Thus, any lower bound for $\dimh(W_{\ba}(\bt))$ is automatically a lower bound for $\dimh{W}$ and hence also for $\dimh{(W_{n}(\btau) \cap \cM)}$.

We are now ready to apply Theorem \ref{rectangles mtp}. We split the remainder of the proof into two cases:

\noindent \underline{{\bf Case 1:}} $\displaystyle{\tau_d \geq \frac{1-\sum_{j=1}^{m}{\tau_{j+d}}}{d}}$.

This case has already been addressed in \cite{BLW}. However, for completeness, we briefly sketch the argument here.
In this case, we let $$
a_i=1+\frac{1-\sum_{j=1}^{m}{\tau_{j+d}}}{d}, \quad \text{and} \quad t_i=(1+\tau_i)-a_i,\quad {\text{for all}} \ 1\le i\le d.
$$
Note that we have $a_i > 1$ and $a_i+t_i=1+\tau_i$ for each $1 \leq i \leq d$. Also note that
\[(a_1-1)+\dots+(a_d-1)+\sum_{j=1}^{m}{\tau_{j+d}} = d \times \left(\frac{1-\sum_{j=1}^{m}{\tau_{j+d}}}{d}\right)+\sum_{j=1}^{m}{\tau_{j+d}} = 1.\]
Hence \eqref{a condition} is satisfied.

Next, following the notation of Theorem \ref{rectangles mtp}, we consider the set
$$
\mathcal{A}=\{a_i, a_i+t_i: 1\le i\le d\}.
$$
Arranging the elements of $\cA$ in descending order, since $\tau_1 \geq \dots \geq \tau_d$, we have
$$
a_1+t_1\ge a_2+t_2\ge \cdots \ge a_d+t_d\ge a_1=\cdots=a_d.
$$

\noindent \underline{Subcase (i):} Suppose $A=a_i$ for some $1\le i\le d$. In this case, the sets $\cK_1, \cK_2, \cK_3$ appearing in Theorem \ref{rectangles mtp} corresponding to this $A$ are:
\begin{align*}
\cK_1&=\{k: a_k\ge A\}=\{1, 2,\cdots, d\},\\
\cK_2&= \{k:a_k+t_k\leq A\} \setminus \cK_1 = \emptyset, \\
\cK_3&=\{1,\dots,d\} \setminus (\cK_1 \cup \cK_2) = \emptyset.
\end{align*}
So the corresponding ``dimension number'' obtained via Theorem \ref{rectangles mtp} in this case is
\[\sum_{k \in \cK_1 \cup \cK_2}{1}+\frac{\sum_{k \in \cK_3}{a_k} - \sum_{k \in \cK_2}{t_k}}{A}=d.\]

\noindent \underline{Subcase (ii):} Suppose $A=a_i+t_i$ for some $1\le i\le d$. We may suppose that $A > a_1$, otherwise the conclusion of Subcase (i) holds. Let $1 \leq i' \leq i$ be the least index such that $a_{i'}+t_{i'} = a_i+t_i$. Then the sets $\cK_1, \cK_2, \cK_3$ appearing in Theorem~\ref{rectangles mtp} corresponding to this $A$ are:
\begin{align*}
\cK_1 &= \{k: a_k \geq A\} = \emptyset, \\[1ex]
\cK_2 &= \{k: a_k+t_k \leq A\} \setminus \cK_1 = \{i',\dots,d\}, \\[1ex]
\cK_3 &= \{1,\dots,d\}\setminus (\cK_1 \cup \cK_2) = \{1,\dots,i'-1\}.
\end{align*}

The calculation which gives the corresponding ``dimension number'' obtained via Theorem~\ref{rectangles mtp} in this case is identical to the calculation in Subcase (i) of Case 2, as given on pages \pageref{subcase 1}--15, since these cases yield the same sets $\cK_1, \cK_2, \cK_3$.

\noindent \underline{{\bf Case 2:}} $\displaystyle{\tau_d < \frac{1-\sum_{j=1}^{m}{\tau_{j+d}}}{d}}$.

In this case, let $1\le K\le d$ be the largest integer such that
\begin{align} \label{K definition}
\tau_K > \frac{1 - \sum_{j=1}^{m}{\tau_{j+d}}-(\tau_{K+1}+\dots+\tau_{d})}{K}.
\end{align}
We choose
\[a_i = \tau_i+1 \quad \text{for } K+1 \leq i \leq d,\]
and
\[a_i = \frac{1 - \sum_{j=1}^{m}{\tau_{j+d}}-(\tau_{K+1}+\dots+\tau_{d})}{K}+1 \quad \text{for } 1 \leq i \leq K.\]
We set
\[t_i = 1+ \tau_i - a_i \quad \text{for } 1 \leq i \leq d.\]
Note that we have $a_i > 1$ and $a_i+t_i=1+\tau_i$ for all $1 \leq i \leq d$ in this case. We also note that
\begin{align*}
(a_1-1)+\dots+(a_d-1)+\sum_{j=1}^{m}{\tau_{j+d}} &=  K \left(\frac{1 - \sum_{j=1}^{m}{\tau_{j+d}}-(\tau_{K+1}+\dots+\tau_{d})}{K}\right) + \sum_{k=K+1}^{d}{\tau_k} + \sum_{j=1}^{m}{\tau_{j+d}} \\
                                                 &= 1 - \sum_{j=1}^{m}{\tau_{j+d}}-(\tau_{K+1}+\dots+\tau_{d}) + \sum_{k=K+1}^{d}{\tau_k} + \sum_{j=1}^{m}{\tau_{j+d}} \\*
                                                 &=1,
\end{align*}
so \eqref{a condition} holds.

Following again the notation of Theorem \ref{rectangles mtp}, we consider the set
\[\cA = \{a_i, a_i+t_i: 1 \leq i \leq d\}.\]
Arranging the elements of $\cA$ in descending order we see that
\begin{align*}
a_1+t_1 (=\tau_1+1) \geq \dots &\geq a_K+t_K (=\tau_K+1) \\
                             &\stackrel{\eqref{K definition}}> a_1 = \dots = a_K \\
                             &> a_{K+1} = a_{K+1}+t_{K+1} \\
                             &\geq a_{K+2} = a_{K+2}+t_{K+2} \\
                             &\phantom{=}\vdots \\
                             &\geq a_d=a_d+t_d.
\end{align*}

To establish the ``dimension number'' obtained via Theorem \ref{rectangles mtp}, we will consider three possible subcases. Throughout, we will frequently use the very useful observation that
\begin{align}\label{5}t_i = 1+\tau_i-a_i = 1+\tau_i-(1+\tau_i) = 0 \quad \text{when } K+1 \leq i \leq d.\end{align}

\noindent \underline{Subcase (i):} \label{subcase 1} $A=a_i+t_i=1+\tau_i$ for some $1 \leq i \leq K$.

Suppose $A \in \cA$ is such that $A=a_i+t_i=1+\tau_i$ for some $1 \leq i \leq K$ and let $1 \leq i' \leq i$ be the least index such that $a_{i'}+t_{i'}=A$. Then the sets $\cK_1, \cK_2, \cK_3$ appearing in Theorem \ref{rectangles mtp} corresponding to this $A$ are:
\begin{align*}
\cK_1 &= \{k: a_k \geq A\} = \emptyset, \\[1ex]
\cK_2 &= \{k: a_k+t_k \leq A\} \setminus \cK_1 = \{i',\dots,d\}, \\[1ex]
\cK_3 &= \{1,\dots,d\}\setminus (\cK_1 \cup \cK_2) = \{1,\dots,i'-1\}.
\end{align*}

Thus, when $A \in \cA$ is such that $A=a_i+t_i=1+\tau_i$ for some $1 \leq i \leq K$, the corresponding ``dimension number'' obtained by using Theorem \ref{rectangles mtp} is 
\begin{align*}
\sum_{k \in \cK_1 \cup \cK_2}{1}+\frac{\sum_{k \in \cK_3}{a_k} - \sum_{k \in \cK_2}{t_k}}{A} &= (d-i'+1) + \frac{\sum_{k=1}^{i'-1}{a_k} - \sum_{k=i'}^{d}{t_k}}{a_{i}+t_{i}} \\
&=(d-i'+1) + \frac{\sum_{k=1}^{i-1}{a_k} - \sum_{k=i}^{d}{t_k}-\sum_{k=i'}^{i-1}a_k-\sum_{k=i'}^{i-1}t_k}{a_{i}+t_{i}}.
\end{align*}
By the definition of $i'$, it follows that $a_k+t_k=a_i+t_i$ for all $i'\le k<i$. Thus  
\begin{align*}
\sum_{k \in \cK_1 \cup \cK_2}{1}+\frac{\sum_{k \in \cK_3}{a_k} - \sum_{k \in \cK_2}{t_k}}{A} 
&=(d-i'+1) + \frac{\sum_{k=1}^{i-1}{a_k} - \sum_{k=i}^{d}{t_k}}{a_{i}+t_{i}}-(i-i')\\
&=(d-i+1) + \frac{\sum_{k=1}^{i-1}{a_k} - \sum_{k=i}^{d}{t_k}}{a_{i}+t_{i}} \\
&= (d-i+1)+\frac{\sum_{k=1}^{d}{a_k}-\sum_{k=i}^{d}{(t_k+a_k)}}{a_i+t_i} \\
          &\stackrel{\eqref{a condition}}= (d-i+1) + \frac{\left(d+1-\sum_{j=1}^{m}{\tau_{j+d}}\right) - \sum_{k=i}^{d}{(1+\tau_k)}}{1+\tau_i} \\
          &= (d-i+1) + \frac{i-\sum_{k=i}^{n}{\tau_k}}{1+\tau_i} \\
          & = \frac{(d-i+1)(1+\tau_i)+i-\sum_{k=i}^{n}{\tau_k}}{1+\tau_i} \\
          &= \frac{(d-i+1)(1+\tau_i)+m(1+\tau_i)-m(1+\tau_i)+i-\sum_{k=i}^{n}{\tau_k}}{1+\tau_i} \\
          &= \frac{(d+1+m)+(d-i+1+m)\tau_i-m(1+\tau_i)-\sum_{k=i}^{n}{\tau_k}}{1+\tau_i} \\
          &=\frac{(n+1)+(n-i+1)\tau_i-\sum_{k=i}^{n}{\tau_k}}{1+\tau_i}-m \\
          &= \frac{n+1+\sum_{k=i}^{n}{(\tau_i-\tau_k)}}{1+\tau_i}-m.
\end{align*}
In the penultimate line above we used the assumption that $n = d+m$.

\noindent \underline{Subcase (ii):} $A=a_i=a_i+t_i=1+\tau_i$ for some $K+1 \leq i \leq d$.

Suppose $A \in \cA$ is such that $A=a_i=a_i+t_i=1+\tau_i$ for some $K+1 \leq i \leq d$ and let $i \leq i' \leq d$ be the greatest index such that $a_{i'}=A$. Then the sets $\cK_1, \cK_2, \cK_3$ appearing in Theorem~\ref{rectangles mtp} corresponding to this $A$ are:
\begin{align*}
\cK_1 &= \{k: a_k \geq A\} = \{1,...,i'\}, \\[1ex]
\cK_2 &= \{k: a_k+t_k \leq A\} \setminus \cK_1 = \{i'+1,\dots,d\}, \\[1ex]
\cK_3 &= \{1,\dots,d\}\setminus (\cK_1 \cup \cK_2) = \emptyset.
\end{align*}
Recall (\ref{5}) that $t_k=0$ for all $K+1 \leq k \leq d$. The corresponding ``dimension number'' obtained in this case is
\[\sum_{k \in \cK_1 \cup \cK_2}{1}+\frac{\sum_{k \in \cK_3}{a_k} - \sum_{k \in \cK_2}{t_k}}{A} = d - \frac{\sum_{k=i'+1}^{d}{t_k}}{a_i+t_i} = d.\]

\noindent \underline{Subcase (iii):} $A=a_i$ for some $1 \leq i \leq K$.

If $A \in \cA$ is such that $A=a_i$ for some $1 \leq i \leq K$, then the sets $\cK_1, \cK_2, \cK_3$ appearing in Theorem~\ref{rectangles mtp} corresponding to this $A$ are:
\begin{align*}
\cK_1 &= \{k: a_k \geq A\} = \{1,...,K\}, \\[1ex]
\cK_2 &= \{k: a_k+t_k \leq A\} \setminus \cK_1 = \{K+1,\dots,d\}, \\[1ex]
\cK_3 &= \{1,\dots,d\}\setminus (\cK_1 \cup \cK_2) = \emptyset.
\end{align*}
The corresponding ``dimension number'' in this case is
\[\sum_{k \in \cK_1 \cup \cK_2}{1}+\frac{\sum_{k \in \cK_3}{a_k} - \sum_{k \in \cK_2}{t_k}}{A} = d - \frac{\sum_{k=K+1}^{d}{t_k}}{a_i+t_i} = d.\]

Thus, we conclude from these cases that
\[\dimh(W_n(\btau) \cap \cM) \geq \dimh{W} = \dimh{(W_{\ba}(\bt))} \geq \min_{1 \leq i \leq d}\left\{\frac{n+1+\sum_{k=i}^{n}{(\tau_i-\tau_k)}}{\tau_i+1}-m\right\},\]
and hence our proof of Theorem \ref{Conjecture W} is complete.

\Addresses


\begin{thebibliography}{1}

\bibitem{Allen-Baker} D. Allen, S. Baker, \emph{A general mass transference principle}, Selecta Math. (N.S.) 25 (2019), no.~3, Paper No. 39, 38 pp.

\bibitem{Allen-Beresnevich} D. Allen, V. Beresnevich, \emph{A mass transference principle for systems of linear forms and its applications}, Compos. Math. 154 (2018), no. 5, 1014--1047.

\bibitem{B} V. Beresnevich, \emph{Rational points near manifolds and metric Diophantine approximation},
Ann. of Math. (2), 175 (2012), no. 1, 187--235.

\bibitem{BDV} V. Beresnevich, D. Dickinson, S. Velani, \emph{Diophantine approximation on planar curves and the distribution of rational points}. With an Appendix II by R. C. Vaughan. Ann. of Math. (2) 166 (2007), no. 2, 367--426.

\bibitem{BLVV} V. Beresnevich, L. Lee, R. C. Vaughan, S. Velani, \emph{Diophantine approximation on manifolds and lower bounds for Hausdorff dimension}, Mathematika 63 (2017), no. 3, 762--779.

\bibitem{BLW} V. Beresnevich, J. Levesley, B. Ward, \emph{A lower bound for the Hausdorff dimension of the set of weighted simultaneously approximable points over manifolds}, Int. J. Number Theory (to appear), preprint: arXiv:2007.04742.

\bibitem{BVVZ} V. Beresnevich, R.C. Vaughan, S. Velani, E. Zorin, \emph{Diophantine approximation on manifolds and the distribution of rational points: contributions to the convergence theory}, Int. Math. Res. Not. (2017), no. 10, 2885--2908.

\bibitem{BV MTP} V. Beresnevich, S. Velani, \emph{A mass transference principle and the Duffin-Schaeffer conjecture for Hausdorff measures}, Ann. of Math. (2) 164 (2006), no. 3, 971--992.

\bibitem{BV slicing} V. Beresnevich, S. Velani, \emph{Schmidt's theorem, Hausdorff measures, and slicing}, Int. Math. Res. Not. 2006, Art. ID 48794, 24 pp.

\bibitem{BV2007} V. Beresnevich, S. Velani, \emph{A note on simultaneous Diophantine approximation on planar curves}, Math. Ann. 337 (2007), no. 4, 769--796.

\bibitem{BZ2010} V. Beresnevich, E. Zorin, \emph{Explicit bounds for rational points near planar curves and metric Diophantine approximation}, Adv. Math. 225 (2010), no. 6, 3064--3087.

\bibitem{Duffin-Schaeffer} R. J. Duffin, A. C. Schaeffer, \emph{Khintchine's problem in metric Diophantine approximation}, Duke Math. J. 8 (1941), 243--255.

\bibitem{Falconer} K. Falconer, \emph{Fractal geometry: Mathematical foundations and applications}, Second edition. John Wiley \& Sons, Inc., Hoboken, NJ, 2003. xxviii+337 pp.

\bibitem{Jarnik31} V. Jarn\'{\i}k, \emph{\"{U}ber die simultanen diophantischen Approximationen}, Math. Z. 33 (1931), no. 1, 505--543.

\bibitem{Khintchine 1924} A. Khintchine, \emph{Einige S\"atze \"uber Kettenbr\"uche, mit Anwendungen auf die Theorie der Diophantischen Approximationen}, Math. Ann. 92 (1924), 115--125.

\bibitem{Khintchine 1926} A. Khintchine, \emph{\"{U}ber die angen\"{a}herte Aufl\"{o}sung linearer Gleichungen in ganzen Zahlen}, Rec. Math. Soc. Moscou, 32  (1925), 203--218.

\bibitem{Khintchine weighted} A. Khintchine, \emph{Zur metrischen Theorie der diophantischen Approximationen}, Math. Z. 24 (1926), no. 1, 706--714.

\bibitem{Koivusalo-Rams} H. Koivusalo, M. Rams, \emph{Mass transference principle: from balls to arbitrary shapes}, Int. Math. Res. Not. (to appear), preprint: arXiv:1812.08557.

\bibitem{Koukoulopoulos-Maynard} D. Koukoulopoulos, J. Maynard, \emph{On the Duffin-Schaeffer conjecture}, Ann. of Math. (2) 192 (2020), no. 1, 251--307.

\bibitem{Pollington-Vaughan} A. D. Pollington, R. C. Vaughan, \emph{The k-dimensional Duffin and Schaeffer conjecture}, Mathematika 37 (1990), no. 2, 190--200.

\bibitem{Rudin} W. Rudin, \emph{Principles of mathematical analysis}, Third edition. International Series in Pure and Applied Mathematics. McGraw-Hill Book Co., New York-Auckland-D\"{u}sseldorf, 1976. x+342 pp.

\bibitem{Rynne} B. Rynne, \emph{Hausdorff dimension and generalized simultaneous Diophantine approximation}, Bull. London Math. Soc. 30 (1998), no. 4, 365--376.

\bibitem{Schmidt} W. Schmidt, \emph{Diophantine approximation}, Lecture Notes in Mathematics, 785. Springer, Berlin, 1980. x+299 pp.

\bibitem{Simmons} D. Simmons, \emph{Some manifolds of Khinchin type for convergence},
J. Th\'{e}or. Nombres Bordeaux 30 (2018), no. 1, 175--193.

\bibitem{Sprindzuk} V. Sprind\v{z}uk, \emph{Metric theory of Diophantine approximations}, (translated and edited by R. A. Silverman). Scripta Series in Mathematics. V. H. Winston \& Sons, Washington, D.C., 1979. xiii+156 pp.

\bibitem{VV} R. C. Vaughan, S. Velani, \emph{Diophantine approximation on planar curves: the convergence theory}, Invent. Math., 166 (2006), no. 1, 103--124.

\bibitem{WW2019} B. Wang, J. Wu, \emph{Mass transference principle from rectangles to rectangles in Diophantine approximation}, preprint (2019), arXiv:1909.00924.

\bibitem{WWX2015} B. Wang, J. Wu, J. Xu, \emph{Mass transference principle for limsup sets generated by rectangles}, Math. Proc. Cambridge Philos. Soc., 158 (2015), no. 3, 419--437.

\bibitem{Zhong} W. Zhong, \emph{Mass transference principle: from balls to arbitrary shapes: measure theory}, J. Math. Anal. Appl. 495 (2021), no. 1, 124691, 23 pp.

\end{thebibliography}
\end{document}